\numberwithin{equation}{section}
\theoremstyle{definition}
\newtheorem{thm}{Theorem}[section]
\newtheorem{prop}[thm]{Proposition}
\newtheorem{lemma}[thm]{Lemma}
\newtheorem{definition}[thm]{Definition}
\title[random iterated function systems]{Bowen's formula and the difference between random iterated function systems and random recursive constructions}
\author{Yuya Arima}
\date{\today}
\address{Graduate School of Mathematics, Nagoya University,
Furocho, Chikusaku, Nagoya, 464-8602, JAPAN} 
\email{yuya.arima.c0@math.nagoya-u.ac.jp}
\subjclass[2020]{28A80, 37H12}
\thanks{{\it Keywords}: Random subset, Cantor set, Hausdorff dimension}
\newcommand{\fram}{\mathcal{F}}
\newcommand{\edgeset}{I}
\newcommand{\seq}{r}
\begin{document}

\begin{abstract}
In this paper, we show that the difference in independence structures between random iterated function systems and random recursive constructions is reflected in the validity of Bowen's formula. More precisely, we construct random iterated function systems for which Bowen's formula fails, whereas Bowen's formula always holds for random recursive constructions. Our construction demonstrates that the difference in independence structures between the two random models has genuine consequences for dimension theory.
\end{abstract}

\maketitle

\section{Introduction}
Let $I$ be a countable index set and let $X$ be a convex compact subset of $\mathbb{R}^d$ ($d\in\mathbb{N}$) such that $X$ is the closure of its interior in $\mathbb{R}^d$. Let
$\Psi:=\{\psi_i\}_{i\in I}$ be an iterated function system consisting of contracting self-similarities $\psi_i$ on $X$. 
Assume that $\Psi$ satisfies the open set condition, that is, for each $i,j\in I$ with $i\neq j$ we have $\psi_i(\text{Int}(X))\cap\psi_j(\text{Int}(X))=\emptyset$, where $\text{Int}(X)$ denotes the Euclidean interior of $X$. 
The limit set 
\[
J(\Psi):=\bigcap_{n\in\mathbb{N}}\bigcup_{\tau\in I^n}\psi_{\tau_1}\circ\cdots\circ\psi_{\tau_n}(X).
\]
of $\Psi$ is, in general, a fractal set and is one of the most studied classes of fractal sets. Bowen’s formula yields that the Hausdorff dimension with respect to the Euclidean metric on $\mathbb{R}^d$ (see \cite{Falconerbook} for the definition) of $J(\Psi)$ is given by 
\[
\inf \left\{t>0:\sum_{i\in I}c_i^t\leq 1\right\}=\inf \left\{t>0:\log\sum_{i\in I}c_i^t\leq 0\right\},
\]
where $c_i$ denotes the contraction ratio of $\psi_i$.
This is a fundamental property of the Hausdorff dimension of $J(\Psi)$ and allows us to study the Hausdorff dimension of $J(\Psi)$ by considering only the natural covers generated by $\Psi$, namely, 
\[
\{\psi_{\tau_1}\circ\cdots\circ\psi_{\tau_n}(X)\}_{\tau\in I^n}, \ n\in\mathbb{N}.
\]
It is well-known (see, for example, \cite{Falconerbook, mauldin2003graph}) that if $\Psi$ satisfies the open set condition then Bowen’s formula always holds. 
In this paper, we show that the difference in independence structures between random iterated function systems and random recursive constructions is reflected in the validity of the random analogue of Bowen's formula.

Random fractal subsets of the $d$-dimensional Euclidean space $\mathbb{R}^d$ ($d\in\mathbb{N}$) have attracted significant attention as models that are closer to natural phenomena than fractal sets generated by deterministic iterated function systems.
There are two well-known random constructions. The first is known as random iterated function systems (RIFSs), and the second is referred to as random recursive constructions. 
In particular, the dimensional properties of random fractal sets constructed by these methods have been extensively studied. See \cite{Yuya, Falconerbook, Falconerrandomfractals, Graf, Hambly, kifer2006random, Kiferlong, LiuWu, Mauldin, mayer2011distance, RGDMS, Trocheit}.

The independence in the choice of IFSs in random recursive constructions can be regarded as stronger than that in RIFSs (see Section \ref{sec statement} for details).
However, to the best of our knowledge, despite the fundamental difference in the independence structures of the two models, dimensional results established for random recursive constructions have also been established for RIFSs.
In contrast, in this paper we show that such a correspondence does not hold in general by providing an example of a random iterated function system for which the random analogue of Bowen's formula fails to hold. We believe that this is the first result to show that the difference in independence structures between random iterated function systems and RIFSs has concrete consequences for dimension theory.


\subsection{Statement of the main theorem}\label{sec statement}
Let $d\in\mathbb{N}$ and let $X$ be a convex compact subset of $\mathbb{R}^d$ such that $X$ is the closure of its interior in $\mathbb{R}^d$. 
Let $\Psi^{(i)}$ ($i\in\mathbb{N}$) be a set of contracting  self-similarities $\{\psi_{j}^{(i)}:X\rightarrow X\}_{j\in I^{(i)}}$, where $I^{(i)}$ is a countable index set with $\#I^{(i)}\geq 2$, such that for all $i\in\mathbb{N}$ and $j,\tilde j\in I^{(i)}$ with $j\neq \tilde j$ we have
\[
\psi_{j}^{(i)}(\text{Int($X$)})\cap\psi_{\tilde j}^{(i)}(\text{Int($X$)})=\emptyset.
\]
We call $\Psi^{(i)}$ an iterated function system (IFS).
For $i\in\mathbb{N}$ and $j\in I^{(i)}$ let $0<c_{j}^{(i)}<1$ be the contraction ratio of $\psi_{j}^{(i)}$, that is, for $x,y\in X$ we have 
\[
{|\psi_{j}^{(i)}(x)-\psi_{j}^{(i)}(y)|}=c_{j}^{(i)}{|x-y|}.
\]
    We consider a family $\Psi:=\{\Psi^{(i)}\}_{i\in\mathbb{N}}$ of iterated function systems. We assume that there exists $0<\eta<1$ such that for all $i\in\mathbb{N}$ and $j\in I^{(i)}$ we have 
\[
c_{j}^{(i)}< \eta.
\]
We take a probability vector 
\[
\vec{p}:=(p_1,p_2,\cdots),
\]
that is, $\sum_{i\in\mathbb{N}}p_i=1$ and $p_i\in[0,1]$ for all $i\in\mathbb{N}$.

We first explain RIFSs. 
Let $\Omega:=\mathbb{N}^{\mathbb{N}}$.
We set $\mathbb{N}^*:=\bigcup_{n=1}^\infty\mathbb{N}^n$. For $n\in\mathbb{N}$ and $\omega\in \mathbb{N}^n$ we define $|\omega|:=n$.
We endow $\Omega$ with the $\sigma$-algebra $\mathcal{B}$ generated by the cylinders $\{[\omega]\}_{\omega\in \mathbb{N}^*}$, where $[\omega]:=\{\tilde \omega\in \Omega:\omega_i=\tilde \omega_i, 1\leq i\leq |\omega|\}$.
We consider the Bernoulli measure $\mathbb{P}:=\mathbb{P}_{\vec{p}}$ on the probability space $(\Omega, \mathcal{B})$ satisfying, for each $\omega\in \mathbb{N}^*$ we have 
\begin{align*}
    \mathbb{P}([\omega])={p_{\omega_1}}p_{\omega_2}\cdots p_{\omega_{|\omega|}}.
\end{align*}
The pair $(\vec{p},\Psi)$ is called a random iterated function system (RIFS). 
The RIFS $(\vec{p},\Psi)$ is said to be finite if for all $i\in\mathbb{N}_{\vec{p}_+}:=\{i\in\mathbb{N}:p_i>0\}$ we have $\#I^{(i)}<\infty$.
The random limit set generated by $(\vec{p},\Psi)$ is constructed by choosing the IFS $\Psi^{(i_k)}$ ($k\in\mathbb{N}$) that is applied at the $k$-th level according to the probability vector $\vec{p}$.
Note that this choice of IFS is uniform for that $k$-th level. 
The limit set along $\omega=(\omega_1,\omega_2,\cdots)\in \Omega$ can be written as
\[
 J(\Psi(\omega)) =\bigcap_{n=1}^{\infty}\bigcup_{\tau\in \Sigma_\omega^n}\psi_{\tau}^{(\omega)}(X),
\text{ where } 
    \Sigma_\omega^n:=\prod_{i=1}^{n} I^{(\omega_i)}
\text{ and }
\psi_{\tau}^{(\omega)}:=\psi_{\tau_1}^{(\omega_1)}\circ\cdots\circ\psi_{\tau_n}^{(\omega_n)}.
\]
We define the Bowen parameter by
\begin{align*}
&B(\Psi):=\inf
\left\{
 t \geq 0: 
E_{i\in\mathbb{N}}\left(\log \sum_{j\in I^{(i)}}\left(c_{j}^{(i)}\right)^ t 
\right):=\sum_{i\in\mathbb{N}}p_i \log \sum_{j\in I^{(i)}}\left(c_{j}^{(i)}\right)^ t  \leq 0
\right\}.
\end{align*}

By Roy and Urba\'{n}ski \cite[Theorem 3.26]{RGDMS}, Rempe-Gillen and Urba\'{n}ski \cite[Theorem 9.1]{nonautonomous} and \cite[Theorem 3.8]{Yuya}, we have the following result. Assume that $\Psi$ satisfies the following: 
    For all $i\in\mathbb{N}$ we have $I^{(1)}=I^{(i)}$ and if $\#I^{(1)}=\infty$ then for all $j\in I^{(1)}$ we have 
    $\inf_{i\in \mathbb{N}_{\vec{p}_+}}{c_{j}^{( i)}}>0$.
Then, for $\mathbb{P}$-a.s. $\omega\in\Omega$ we have
\[
\dim_H( J(\Psi(\omega)) )=B(\Psi),
\]
where $\dim_H( J(\Psi(\omega)) )$ denotes the Hausdorff dimension of $ J(\Psi(\omega)) $ with respect to the Euclidean metric on $\mathbb{R}^d$ (see also Kifer \cite{Kiferlong} and Hambly \cite{Hambly}).

Next, we briefly explain random recursive constructions. For detailed mathematical descriptions, we refer the reader, for example, to Mauldin and Williams \cite{Mauldin} and to Falconer's textbook \cite[Section 15]{Falconerbook}.
In random recursive constructions, the limit set is constructed in a recursive manner by assigning the IFS $\Psi^{(i_\tau)}$ ($i_\tau\in\mathbb{N}$) chosen according to the probability vector $\vec p$ to every finite word $\tau$ that has already been constructed. 
Note that, while in RIFSs the choice of an IFS is independent only across levels and is made uniformly for all words of the same length, in random recursive constructions this choice of the IFS is independent for all distinct words.
This implies that random recursive constructions exhibit a stronger form of independence in the choice of IFSs than RIFSs.
Mauldin and Williams \cite[Theorem 1.1]{Mauldin} showed that 
    \begin{align}\label{thm mouldin}
    &\text{the Hausdorff dimension of the limit set constructed in}
    \\&
    \text{ such a way is a.s. given by }
\inf \left\{ t \geq 0: \sum_{i\in\mathbb{N}} p_i\sum_{j\in I^{(i)}} \left(c_{j}^{(i)}\right)^ t  \leq 1
\right\}.\nonumber
\end{align}
Note that we obtained the above result without making any assumptions on $\Psi$.
However, the following main theorem shows that, for RIFSs Bowen’s formula does not hold in general.
\begin{thm}\label{thm main}
    There exists a finite random iterated function system $(\vec{p},\Psi)$ such that for $\mathbb{P}$-a.s. $\omega\in \Omega$ we have
    \[
    \dim_H( J(\Psi(\omega)) )<B(\Psi).
    \]
\end{thm}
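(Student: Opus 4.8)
The plan is to produce an explicit family $\Psi=\{\Psi^{(i)}\}_{i\in\mathbb N}$ that deliberately violates the hypotheses under which Bowen's formula is known to hold, namely by taking the index sets $I^{(i)}$ to have strictly unbounded cardinality (so that $I^{(1)}=I^{(i)}$ fails) together with contraction ratios $c_j^{(i)}$ that degenerate as $i\to\infty$ (so that no bounded-ratio condition survives). I would work in $d=1$ with $X=[0,1]$, realize each $\psi_j^{(i)}$ as an orientation-preserving similarity whose images are pairwise disjoint closed subintervals (so the open set condition holds with $O=\mathrm{Int}(X)$), and choose $\vec p$ of full support $\mathbb N_{\vec p_+}=\mathbb N$. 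Each $\Psi^{(i)}$ will be genuinely non-homogeneous: it will carry a bounded "fat" part together with a large "thin" part consisting of $\#I^{(i)}\to\infty$ maps of very small ratio, the two parts being tuned so that the Bowen parameter $B(\Psi)$ remains a definite, explicitly computable finite number.

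First I would pin down $B(\Psi)$. Writing $f_t(i):=\sum_{j\in I^{(i)}}(c_j^{(i)})^t$ and $A(t):=\sum_i p_i\log f_t(i)$, the number $B(\Psi)$ is the threshold where the decreasing function $A$ crosses $0$; for the chosen ratio profiles this can be solved in closed form. The point of the construction is that $A$ is made to behave singularly at this threshold (the expected Lyapunov data associated with the critical Gibbs weights $q_j^{(i)}\propto (c_j^{(i)})^{B(\Psi)}$ diverges), so that the value $B(\Psi)$ is approached in the variational problem $\sup_q E[H(q)]/E[\chi(q)]$ only through degenerate measures with infinite expected contraction rate. By the theorem quoted from \cite{RGDMS} and \cite{nonautonomous}, equality with $\dim_H$ would hold were the violated hypotheses in force, so it is exactly this degeneracy that the example must exploit.

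The heart of the argument is the strict upper bound $\dim_H(J(\Psi(\omega)))<B(\Psi)$ for $\mathbb P$-a.e. $\omega$. Because $\mathrm{diam}\,\psi_\tau^{(\omega)}(X)=(\mathrm{diam}\,X)\prod_{k\le|\tau|}c_{\tau_k}^{(\omega_k)}$ and the open set condition makes the Euclidean Hausdorff dimension comparable to the symbolic one, it suffices to fix some $s<B(\Psi)$ and to produce, for a.e.\ $\omega$, covers of $J(\Psi(\omega))$ by cylinders of arbitrarily small diameter with $s$-cost $\sum_{\tau}(\mathrm{diam}\,\psi_\tau^{(\omega)}(X))^s\to 0$. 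I would not cut all branches at a common scale (that uniform cover realizes precisely the Bowen exponent); instead I would cut each branch along an adaptively chosen stopping line, descending cheaply through the "fat" directions and cutting just before the degenerate "thin" explosions. The two technical inputs are shift-ergodicity of $\mathbb P$ (so that $\dim_H(J(\Psi(\omega)))$ is an a.s.\ constant, via the near-invariance of the dimension under $\sigma$) and a Borel–Cantelli estimate on $\omega$ guaranteeing that the favorable configurations of the environment recur along a sequence of scales.

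The step I expect to be the main obstacle is defeating the averaging that forces Bowen's formula in the bounded-geometry regime. For $s<B(\Psi)$ the partition function $\prod_{k\le m}f_s(\omega_k)$ has strictly positive drift, so no single-scale cover can beat $B(\Psi)$; the gain must come entirely from the multi-scale, branch-dependent cutting, and it survives only because the unboundedness of $\#I^{(i)}$ and of $\log(1/c_j^{(i)})$ makes the increments $\log f_s(\omega_k)$ heavy-tailed enough that the adaptively pruned cost stays summable while the uniform cost diverges. Calibrating $\vec p$, the cardinalities $\#I^{(i)}$, and the fat/thin ratio asymptotics so that $B(\Psi)$ is held at a prescribed finite value \emph{and} the achievable covering exponent is simultaneously pushed strictly below it is the delicate quantitative core of the example, and is where the bulk of the work would lie.
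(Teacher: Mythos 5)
Your high-level diagnosis overlaps with the paper's in several places: the counterexample does violate exactly the hypotheses you list (index sets of unbounded cardinality, ratios $c_j^{(i)}\to 0$ as $i\to\infty$), it does use a heavy-tailed probability vector (the paper takes $p_n=(Cn^2)^{-1}$, so $E[\omega_1]=\infty$), and it does rely on favorable configurations recurring almost surely (the record-time Lemma \ref{lemma tequnical}). But two assertions at the core of your covering argument are false, and they sink the plan. First, the claim that the open set condition makes the Euclidean Hausdorff dimension comparable to the symbolic one is precisely what fails here: any counterexample \emph{must} live in the gap between the two, since the symbolic dimension of an RIFS always equals $B(\Psi)$ in the relevant regime. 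Second, branch-adaptive cylinder covers gain nothing in an RIFS. Since the same IFS $\Psi^{(\omega_{k+1})}$ is applied to every cylinder of level $k$, the labelled tree below any level-$k$ cylinder is the same up to the scalar $\bigl(\operatorname{diam}\psi_\tau^{(\omega)}(X)\bigr)^s$; hence if $g(k)$ denotes the infimal relative $s$-cost of a cylinder antichain below a level-$k$ node, it satisfies $g(k)=\min\bigl(1,\,f_s(\omega_{k+1})\,g(k+1)\bigr)$ with $f_s(i):=\sum_{j\in I^{(i)}}(c_j^{(i)})^s$, and therefore the infimal cost of \emph{every} cylinder cover, adaptive stopping lines included, equals $\min\bigl(1,\inf_m\prod_{k=1}^m f_s(\omega_k)\bigr)$, which is attained by common-scale covers --- exactly the covers you discard. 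For $s<B(\Psi)$ one has $E[\log f_s]>0$ (possibly $+\infty$); moreover in your construction $\log f_s$ is bounded below, because the bounded ``fat'' part forces $f_s(i)\geq\mathrm{const}>0$, so the strong law applies even with infinite mean and gives $\sum_{k\leq m}\log f_s(\omega_k)\to+\infty$ a.s. Thus the infimal cylinder-cover cost is a.s.\ bounded away from $0$, and no calibration of $\vec p$, $\#I^{(i)}$ and the ratio asymptotics --- the part you defer as ``the bulk of the work'' --- can make your strategy produce $\dim_H(J(\Psi(\omega)))<B(\Psi)$.

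The ingredient your proposal never touches, and which is the actual heart of the paper, is the geometric \emph{arrangement} of the images. The paper takes each map of $\Psi^{(i)}$ to be a composition of dyadic maps with a long common no-branching prefix, $\psi_\tau^{(i)}=\phi_{\boldsymbol{0}}^{U_i}\circ\phi_{\tau_{U_i+1}}\circ\cdots\circ\phi_{\tau_{U_i+V_i}}$, so that all $2^{dV_i}$ images of $\Psi^{(i)}$ tile a single sub-cube of side $2^{-U_i}$ while each image has diameter comparable to $2^{-(U_i+V_i)}$; the open set condition still holds, but the children of every cylinder are violently clustered. A cover which is \emph{not} a union of RIFS-cylinders --- one cube of relative size $2^{-U_{\omega_m}}$ per parent, i.e.\ a common-scale cover at an intermediate scale inside a block --- is then far cheaper than any cylinder cover. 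The frame conditions $nU_n\leq V_n$ and $(U_n+V_n)^3\leq U_{n+1}$ keep the expectation-based parameter at $B(\Psi)=d$ (Proposition \ref{prop pressure}), while the records of the heavy-tailed $\omega_i$ guarantee that infinitely often a single no-branching prefix $U_{\omega_{b_n}}$ dwarfs the entire previous history $j_{b_n-1}$, driving the cost of these intermediate covers to zero for every $t>0$ and hence forcing $\dim_H(J(\Psi(\omega)))=0$ a.s.\ (Proposition \ref{prop Hausdorff zero}). In short, the counterexample is driven by clustering of children, which the OSC permits, and not by adaptive pruning, which the RIFS structure neutralizes; a corrected version of your plan would have to build this clustering into the ``thin'' part explicitly, at which point it becomes the paper's construction.
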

As mentioned above, while in RIFSs the choice of an IFS is independent only across levels and is made uniformly for all words of the same length, in random recursive constructions the choice is independent for all distinct words.
This reflects a fundamental difference in the independence structures of random recursive constructions and RIFSs.  
However, analogous results on fractal dimensions established for random recursive constructions have also been obtained for RIFSs.
See, for example, \cite{Falconerbook, Falconerrandomfractals, Graf, LiuWu, Trocheit, Mauldin} for results on Random recursive constructions, and 
\cite{Hambly, Kiferlong, nonautonomous, RGDMS, Trocheit, Yuya}
 for results on RIFSs.
In contrast, Theorem \ref{thm main} together with \eqref{thm mouldin} shows the following: Let $\vec{p}$ be the probability vector and let $\Psi$ be the family of IFSs constructed in Theorem \ref{thm main}. 
Although Bowen's formula holds for $\vec{p}$  and $\Psi$ in random recursive constructions by \eqref{thm mouldin}, it fails for $\vec{p}$ and $\Psi$ in RIFSs by Theorem \ref{thm main}.  
This implies that analogous results established for random recursive constructions are not always obtained for RIFSs. 
This demonstrates that the difference in independence structures between random recursive constructions and RIFSs has genuine consequences for dimension theory.

In non-autonomous settings, Rempe-Gillen and Urbański \cite{nonautonomous} constructed examples for which a version of Bowen's formula fails.
However, random settings differ fundamentally from non-autonomous settings. 
In fact, in non-autonomous settings, one can freely prescribe the sequence of IFSs to construct examples for which Bowen's formula fails. By contrast, in random settings, the sequence of IFSs is determined by the underlying random process and therefore cannot be prescribed arbitrarily.
Hence, although examples for which a version of Bowen's formula fails are known in non-autonomous settings, this does not make it straightforward to construct random iterated function systems for which the random analogue of Bowen's formula fails almost surely.
Indeed, for the probability vector $\vec{p}$ and the family of IFSs $\Psi$  constructed in Theorem \ref{thm main}, the random analogue of Bowen's formula holds almost surely in the setting of random recursive constructions by \eqref{thm mouldin}. 
Consequently, the failure of Bowen's formula in our example cannot be explained merely by the construction used in non-autonomous settings. Rather, it reflects the fundamentally different independence structures of the two random models.

\section{Proof of the main theorem}
Let $d\geq 1$ and let $X:=[0,1]^{d}$. 
We denote by
$(\boldsymbol{e}_1,\boldsymbol{e}_2,\cdots,\boldsymbol{e}_{d})$  the canonical base of $\mathbb{R}^d$.
For each $\boldsymbol{i}=(i_1,i_2,\cdots,i_d)\in \{0,1\}^d$ we 
define the map $\phi_{\boldsymbol{i}}:X\rightarrow X$ by 
\[
\phi_{\boldsymbol{i}}(x)=\frac{1}{2}x+\frac{1}{2}v_{\boldsymbol{i}}, \text{ where }v_{\boldsymbol{i}}:=\sum_{\ell=1}^{d}i_\ell\boldsymbol{e}_\ell.
\]
We define the index sets $I_1$ and $I_{2^d}$ by 
\[
I_1:=\{0\}^{d}
\text{ and }
I_{2^d}:=\{0,1\}^d
\]
\begin{definition}
    A  pair $\fram=(\{U_n\}_{n\in\mathbb{N}},\{V_n\}_{n\in\mathbb{N}})$ of sequences of positive integers is called a frame if  $\fram$ satisfies the following conditions:
    \begin{itemize}
        \item[(F)] For all $n\in\mathbb{N}$ we have 
        $nU_n\leq V_n 
            \text{ and }
            (U_n+V_n)^3\leq U_{n+1}.$
    \end{itemize}
\end{definition}
We consider a fixed frame $\mathcal{F}$ throughout this section. 
For each $i\in\mathbb{N}$ we define 
\begin{align*}
    I^{(i)}:=\edgeset(\fram)^{(i)}:=I^{U_i}_{1}\times I_{2^d}^{V_i}.
\end{align*}
For each $i\in\mathbb{N}$ and $\tau=(\tau_1,\cdots,\tau_{U_i+V_i})\in \edgeset^{(i)}$ we define
\begin{align}\label{eq def psi}
\psi_\tau^{(i)}:=\phi_{\tau_1}\circ\cdots\circ\phi_{\tau_{U_n+V_n}}
\text{ and }
\Psi^{(i)}:=\Psi(\fram)^{(i)}:=\{\psi_{\tau}^{(i)}\}_{\tau\in \edgeset^{(i)}}.    
\end{align}
We take the probability vector $\vec{p}:=(p_1,p_2,\cdots)$ such that for all $n\in\mathbb{N}$ we have
\begin{align}\label{eq probability vector}
p_n=\frac{1}{Cn^2},\text{ where }C:=\sum_{n=1}^\infty\frac{1}{n^2}.    
\end{align}
Let $(\Omega,\mathcal{B},\mathbb{P})$ be the probability space as defined in the introduction. Throughout the remainder of this paper, we use the notation introduced in the introduction.
\begin{prop}\label{prop pressure}
Let $t\in [0,\infty)$. For $\mathbb{P}$-a.s. $\omega\in \Omega$ we have
\begin{align}\label{eq pressure}
E_{i\in\mathbb{N}}\left(\log \sum_{j\in I^{(i)}}
\left(c_{j}^{(i)}\right)^t
\right)
=\left\{
 \begin{array}{cc}
   \infty   & \text{if}\ t<d\\
   -\infty   & \text{if}\ t\geq d 
 \end{array}
 \right.   .      
\end{align}
In particular, we have
$B(\Psi)=d$
\end{prop}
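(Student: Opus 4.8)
The plan is to reduce the proposition to an explicit computation of the relevant pressure-type sum, followed by a strong law of large numbers with a one-sided infinite mean. First I would record the two elementary facts that drive the whole argument: each map $\psi_\tau^{(i)}$ is a composition of $U_i+V_i$ maps $\phi_{\boldsymbol i}$, each of contraction ratio $1/2$, so $c_j^{(i)}=2^{-(U_i+V_i)}$ for every $j\in I^{(i)}$; and $\#I^{(i)}=(\#I_1)^{U_i}(\#I_{2^d})^{V_i}=2^{dV_i}$. Hence, for every $t\geq 0$,
\[
\log\sum_{j\in I^{(i)}}\left(c_j^{(i)}\right)^t=(\log 2)\,g_t(i),\qquad g_t(i):=(d-t)V_i-tU_i .
\]

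Next I would analyse the sign and growth of $g_t(i)$ using the frame conditions (F1)--(F2). Since $nU_n\leq V_n$ gives $V_i/U_i\geq i$, the number $g_t(i)$ is nonnegative for all but the finitely many indices $i<t/(d-t)$ when $t<d$, while $g_t(i)\leq 0$ for every $i$ when $t\geq d$. Moreover $(U_n+V_n)^3\leq U_{n+1}$ forces $U_i$ to grow faster than any polynomial, so $U_i/i^2\to\infty$. From this I would evaluate $E_{i\in\mathbb N}\!\left(\log\sum_j (c_j^{(i)})^t\right)=(\log 2)\sum_i p_i\,g_t(i)$ with $p_i=1/(Ci^2)$: for $t<d$ the bound $g_t(i)\geq U_i[(d-t)i-t]$ makes the terms blow up, so the series diverges to $+\infty$; for $t\geq d$ the bound $-g_t(i)\geq tU_i$ makes it diverge to $-\infty$. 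This computation also gives $B(\Psi)=d$ at once, because the defining expectation is $>0$ for $t<d$ and $\leq 0$ for $t\geq d$.

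For the middle equality I would exploit the multiplicativity $c_\tau^{(\omega)}=\prod_{k=1}^n c_{\tau_k}^{(\omega_k)}$ to factor the sum over $\Sigma_\omega^n=\prod_{k=1}^n I^{(\omega_k)}$, obtaining
\[
\frac1n\log\sum_{\tau\in\Sigma_\omega^n}\left(c_\tau^{(\omega)}\right)^t=\frac{\log 2}{n}\sum_{k=1}^n g_t(\omega_k).
\]
Thus the normalized logarithm is exactly the ergodic average of the i.i.d.\ random variables $g_t(\omega_k)$, whose common law is $\mathbb P(\omega_1=i)=p_i$ and whose common mean is the expectation just computed, namely $\pm\infty$. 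The decisive point, already established above, is that for $t<d$ only the positive part of $g_t$ has infinite expectation (its negative part being a finite sum), and for $t\geq d$ only the negative part does. I would then invoke the one-sided strong law of large numbers: if i.i.d.\ variables satisfy $E(g_t^+)=\infty$ and $E(g_t^-)<\infty$, then $n^{-1}\sum_k g_t(\omega_k)\to+\infty$ almost surely, and symmetrically in the other case. This law follows from the classical finite-mean SLLN by truncating $g_t$ at a level $M$, applying the SLLN to the truncation, and letting $M\to\infty$ by monotone convergence. Combining these steps yields the claimed a.s.\ limit.

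The step I expect to be the main obstacle is this last application of the strong law, and specifically the verification that the ``wrong-sign'' part of $g_t$ has finite expectation. Both the positive and the negative parts of $g_t$ would individually have infinite expectation were it not for the frame inequality $V_i\geq iU_i$, which pins down the sign of $g_t(i)$ for all large $i$; once this observation is in place, the one-sided SLLN applies cleanly and the proposition follows.
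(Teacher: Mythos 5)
Your proposal is correct and follows essentially the same route as the paper: both reduce the middle term to i.i.d.\ (Birkhoff) averages of $Z_t(\omega)=(\log 2)\left((d-t)V_{\omega_1}-tU_{\omega_1}\right)$, determine the sign and growth of these variables from the frame inequalities $V_i\geq iU_i$ and $U_{i+1}\geq (U_i+V_i)^3$, and handle the infinite mean by truncation plus monotone convergence. The only cosmetic difference is that the paper truncates by index ($Z_{t,M}=Z_t\mathbf{1}_{\{\omega_1\leq M\}}$) and invokes Birkhoff's ergodic theorem, whereas you truncate by value and invoke the one-sided strong law of large numbers; for the Bernoulli shift these amount to the same argument.
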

\begin{proof}
Let $t\in [0,\infty)$. Notice that we have 
\[
\sum_{i=1}^\infty p_i \log \sum_{j\in I^{(i)}}\left(c_j^{(i)}\right)^t=\frac{\log2}{C}\sum_{i=1}^\infty\frac{-tU_i+(d-t)V_i}{i^2}
\]
By definition of the frame, for all $t\geq d$ we obtain
\begin{align*}
\sum_{i=1}^\infty\frac{-tU_i+(d-t)V_i}{i^2}
\leq
\sum_{i=1}^{\infty}\frac{-dU_{i}}{i^2}
    \leq
    \sum_{i=1}^{\infty}\frac{-d}{i}=-\infty.
\end{align*}
Next, we consider the case $0\leq t<d$. Let $0\leq t<d$.
We take a large number $M_t\geq 1$ such that for all $i\geq M_t$ we have $-t+(d-t)i\geq 1$. By the definition of the frame,  we have
\begin{align*}
\sum_{i=1}^{\infty}\frac{-tU_i+(d-t)V_i}{i^2}
\geq\sum_{i=1}^{\infty}\frac{(-t+(d-t)i)U_i}{i^2}
\geq
D_t
+
\sum_{i=M_t}^{\infty}\frac{1}{i}=\infty
,
\end{align*}
where $D_t:=\sum_{i=1}^{M_t-1}({(-t+(d-t)i)U_i})/{i^2}$.
\end{proof}

Next, we shall show that for $\mathbb{P}$-a.s. $\omega\in \Omega$ we have
    $\dim_H(J(\Psi(\omega)))=0$.
The proof of this is divided into several lemmas.

For each $i\in\mathbb{N}$ we define the random variable $X_i:\Omega\rightarrow \mathbb{N}$ by
\[
X_i(\omega)=\omega_i.
\]
Note that the family of events $\{\{X_i\geq i\}\}_{i\in\mathbb{N}}$ is independent and 
\begin{align*}
    \sum_{i=1}^\infty\mathbb{P}(\{X_i\geq i\})=\sum_{i=1}^{\infty}\sum_{k=i}^\infty\frac{1}{Ck^2}=\sum_{k=1}^\infty\frac{1}{Ck}=\infty.
\end{align*}
Therefore, by Borel-Cantelli's Lemma, we obtain 
\[
\mathbb{P}(\Omega_\infty)=1, \text{ where }\Omega_\infty:=\bigcap_{i\in\mathbb{N}}\bigcup_{k\geq i}\{X_k\geq k\}.
\]
In particular, for all $\omega\in \Omega_\infty$ there exists $\{n_l\}_{l\in\mathbb{N}}\subset\mathbb{N}$ satisfying  $n_l<n_{l+1}$ and 
$    X_{n_l}(\omega)\geq n_l$ for all $l\in\mathbb{N}$.

\begin{lemma}\label{lemma tequnical}
Let $\omega\in \Omega_\infty$. Then,
there exist sequences $\{\seq_n\}_{n\in\mathbb{N}}\subset \mathbb{N}$ and $\{b_n\}_{n\in\mathbb{N}} \subset \mathbb{N}$ such that we have the following:
\begin{itemize}
    \item[(S1)] For all $n\in\mathbb{N}$ we have $b_n\leq \seq_n$.
    \item[(S2)] For all $n\in\mathbb{N}$ we have  $X_{b_n}(\omega)\geq \seq_n$
    \item[(S3)] For all $n\in\mathbb{N}$ we have $\max_{1\leq k\leq b_n-1}X_k(\omega)<X_{b_n}(\omega)$ if $b_n>1$ and $X_1(\omega)=X_{b_n}(\omega)$ otherwise.
    \item[(S4)] For all $n\in\mathbb{N}$ we have $\seq_{n}<\seq_{n+1}$ and  $b_{n}<b_{n+1}$.
\end{itemize}
\end{lemma}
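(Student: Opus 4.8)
The plan is to realize the $b_n$ as a strictly increasing sequence of strict record positions of $(\omega_i)_i$ at which the value dominates the index, and then to interpolate the $a_n$ inside the window they leave. First I would observe that (S1) and (S2) together force $b_n\le a_n\le X_{b_n}(\omega)=\omega_{b_n}$, so the set $[b_n,\omega_{b_n}]\cap\mathbb{N}$ from which $a_n$ must be chosen is nonempty \emph{exactly} when $\omega_{b_n}\ge b_n$. Thus the whole lemma reduces to producing, for the fixed $\omega$, infinitely many strictly increasing strict records $b$ with $\omega_b\ge b$, together with the correct normalization $a_1=N_\omega$. The \emph{only} nontrivial input this requires is that the set $S(\omega):=\{i\in\mathbb{N}:\omega_i\ge i\}$ be infinite.

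I would emphasize at the outset that the divergence $\tfrac1n\sum_{i\le n}X_i(\omega)\to\infty$ defining $\Omega_\infty$ does \emph{not} by itself make $S(\omega)$ infinite: for instance $\omega_i=\lceil\sqrt{i}\,\rceil$ has divergent averages, yet $\omega_i<i$ for every $i\ge 2$, so its strict records carry values of order $\sqrt{\cdot}$ and \emph{no} record satisfies $\omega_b\ge b$ beyond $b=2$. The property ``$S(\omega)$ infinite'' is instead a further almost-sure event: since $\mathbb{P}(\omega_i\ge i)=\sum_{n\ge i}p_n\ge 1/(Ci)$ and the coordinates are $\mathbb{P}$-independent, the series $\sum_i\mathbb{P}(\omega_i\ge i)$ diverges, so the second Borel--Cantelli lemma gives $\{\,S(\omega)\text{ infinite}\,\}$ full measure. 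I would therefore record this event \emph{once}, taking $\Omega_\infty$ to be the (still full-measure) set on which both the divergence of the averages and the infinitude of $S(\omega)$ hold; this does not affect the downstream almost-sure conclusion $\dim_H(J(\Psi(\omega)))=0$. With this standing convention the rest of the argument is a \emph{deterministic} construction carried out for each individual $\omega\in\Omega_\infty$, so the sequences are produced for \emph{every} such $\omega$, not merely for almost every one. This relocation of the probabilistic step to the definition of $\Omega_\infty$ is precisely the point where care is required.

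The engine of the construction is a record-promotion step valid whenever $S(\omega)$ is infinite: from any strict record $b$ one can find a strictly later strict record $b'>b$ with $\omega_{b'}\ge b'$ and with $\omega_{b'}$ exceeding any prescribed threshold. Indeed, writing $M_i:=\max_{1\le\ell\le i}\omega_\ell$ and $V:=\omega_b=M_b$, I would choose $i\in S(\omega)$ exceeding the prescribed threshold and with $i>V$; then $\omega_i\ge i>V=M_b$, so the running maximum strictly increases somewhere in $(b,i]$, and the largest strict record position $b'\le i$ satisfies $b'>b$ and $\omega_{b'}=M_i\ge\omega_i\ge i\ge b'$. Iterating this step generates a strictly increasing sequence of strict records along which the values simultaneously dominate the index and tend to infinity.

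It remains to install the boundary value $a_1=N_\omega$ and to interpolate the $a_n$. Since $N_\omega$ is only required to be \emph{some} threshold past which $\sum_{i\le n}X_i(\omega)\ge n$, and any larger integer serves equally well, I may and do choose $N_\omega\in S(\omega)$; then $b_1:=\min\{k\le N_\omega:\omega_k=M_{N_\omega}\}$ is a strict record with $b_1\le N_\omega$ and $\omega_{b_1}=M_{N_\omega}\ge\omega_{N_\omega}\ge N_\omega$, so setting $a_1:=N_\omega$ secures (S1)--(S3) at $n=1$ (with the degenerate case $b_1=1$ handled by the ``otherwise'' clause). Given a good record $b_n$ with $\omega_{b_n}\ge a_n\ge b_n$, I apply the promotion step with prescribed threshold $\max\{a_n,\omega_{b_n},b_n\}$ to obtain $b_{n+1}>b_n$ with $\omega_{b_{n+1}}\ge b_{n+1}$ and $\omega_{b_{n+1}}>a_n$, and set $a_{n+1}:=\max\{a_n+1,\,b_{n+1}\}$; then $b_{n+1}\le a_{n+1}\le\omega_{b_{n+1}}$, both sequences strictly increase, and $b_{n+1}$ is a strict record, giving (S1)--(S4). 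The main obstacle is conceptual rather than computational: recognizing that the averaged growth underlying $\Omega_\infty$ is too weak to force good records, and isolating the sharp deterministic hypothesis that $S(\omega)$ is infinite together with the freedom to place $N_\omega$ inside $S(\omega)$; once that hypothesis is in hand, the promotion step and the interpolation of the $a_n$ are entirely elementary.
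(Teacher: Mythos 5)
Your proposal is correct, and it departs from the paper at exactly the point where the paper's own argument is defective, so the comparison is worth spelling out. The paper's proof uses the same combinatorial skeleton you do (take $b_n$ to be the first attainment of $\max_{1\le k\le a_n}X_k$, then advance), but it justifies the crucial inequality (S2), $X_{b_n}(\omega)\ge a_n$, ``by (\ref{eq average})''. This cannot work: (\ref{eq average}) reads $\sum_{i=1}^n X_i(\omega)\ge n$, which is vacuously true for every $\omega\in\Omega$ (each $\omega_i\ge 1$) and yields only $\max_{1\le k\le n}X_k(\omega)\ge 1$; to extract $\max_{1\le k\le n}X_k(\omega)\ge n$ from an averaging bound one would need $\sum_{i\le n}X_i(\omega)\ge n^2$, which does not follow from the divergence of averages defining $\Omega_\infty$. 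Your two diagnostic observations are both on target: (S1), (S2) and (S4) force $b_n\le a_n\le\omega_{b_n}$ with $a_n\to\infty$, so the lemma's conclusion is \emph{equivalent} to the infinitude of $S(\omega)=\{i:\omega_i\ge i\}$, and your example $\omega_i=\lceil\sqrt{i}\,\rceil$ shows divergent averages do not imply this. One can even check that the stronger statement the paper implicitly needs fails almost surely: $\mathbb{P}(\max_{k\le n}X_k<n)=(1-\mathbb{P}(X_1\ge n))^n\to e^{-1/C}>0$, and since $\{\max_{k\le n}X_k<n\ \text{i.o.}\}$ is a tail event, it has probability one; so the paper's rigid choice $a_{\ell+1}=X_{b_\ell}(\omega)+1$ can land on an index $n$ with $\max_{k\le n}X_k<n$ no matter how $N_\omega$ is enlarged.

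Your repair is the right one and is complete as written: intersect $\Omega_\infty$ with the event $\{S(\omega)\ \text{infinite}\}$, which has full measure by the second Borel--Cantelli lemma because $\mathbb{P}(\omega_i\ge i)=\sum_{n\ge i}(Cn^2)^{-1}\ge (Ci)^{-1}$ and the coordinates are independent; choose $N_\omega\in S(\omega)$ (any integer above the original threshold still witnesses (\ref{eq average}), so this is legitimate and keeps (S1) intact); and replace the paper's blind step $a_{\ell+1}=X_{b_\ell}+1$ by your promotion step, which waits for the next element of $S(\omega)$ beyond the threshold and sets $a_{n+1}=\max\{a_n+1,b_{n+1}\}$. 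The verification of (S1)--(S4), including the degenerate clause $b_1=1$ in (S3) and the strict-record property of the first attainment of the running maximum, is correct. Since Proposition \ref{prop Hausdorff zero} only uses the lemma on a full-measure set, shrinking $\Omega_\infty$ is harmless downstream, as you note. In short: same inductive first-argmax construction as the paper, but with the genuinely needed probabilistic input --- which the paper's proof lacks --- supplied by Borel--Cantelli, and with the inductive choice of $a_{n+1}$ made flexible enough to use only infinitely many good indices rather than all sufficiently large ones.
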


\begin{proof}
Fix $\omega\in \Omega_\infty$. Then, there exists $\{n_l\}_{l\in\mathbb{N}}\subset\mathbb{N}$ such that for all $l\in\mathbb{N}$ we have $n_l<n_{l+1}$ and 
\begin{align}\label{eq average}
    X_{n_l}(\omega)\geq n_l.
\end{align}
    We will construct sequences $\{\seq_n\}_{n\in\mathbb{N}}$ and $\{b_n\}_{n\in\mathbb{N}}$ satisfying desired conditions inductively. 
    Let $\seq_1:=n_1$ and let 
    \[
    b_1:=\min\left\{i\in\mathbb{N}:i\leq \seq_1,\ X_i(\omega)=\max _{1\leq k\leq \seq_1}X_k(\omega)\right\}.
    \]
    Then, by (\ref{eq average}), $\seq_1$ and $b_1$ satisfy (S1), (S2) and (S3) for $n=1$. 

    Since for all $l\in\mathbb{N}$ we have $n_l<n_{l+1}$, there exists $l_2\in\mathbb{N}$ such that $n_{l_2}>\seq_{1}$ and  $n_{l_2}\geq X_{b_1}(\omega)+1$.
    We set $\seq_2:=n_{l_2}$
and
\[
b_2:=\min\left\{i\in\mathbb{N}:i\leq \seq_2,\ X_i(\omega)=\max _{1\leq k\leq \seq_2}X_k(\omega)\right\}.
\]
Then, we have $b_2\leq \seq_2$, $\max_{1\leq k<b_2-1}X_k(\omega)<X_{b_2}(\omega)$ and $\seq_1<\seq_2$. 
By (\ref{eq average}), we have $X_{b_2}(\omega)\geq \seq_2$.
Therefore, 
since $\max_{1\leq k\leq b_1} X_k(\omega)=X_{b_1}(\omega)<\seq_2$, we have $b_1<b_2$.
 Hence, $\{\seq_1,\seq_2\}$ and $\{b_1,b_2\}$ satisfy desired conditions for $1\leq n\leq 2$. 

Let $j\geq 2$.
We assume that sequences $\{\seq_n\}_{n=1}^{j}$ and $\{b_n\}_{n=1}^{j}$ satisfying desired conditions for all $1\leq n\leq j$ are already defined.
Then,  there exists $l_{j+1}\in\mathbb{N}$ such that 
$n_{l_{j+1}}>\seq_{j}$ and
$n_{l_{j+1}}\geq X_{b_j}(\omega)+1$.
We set $\seq_{j+1}:=n_{l_{j+1}}$ and 
\[
b_{j+1}:=\min\left\{i\in\mathbb{N}:i\leq \seq_{j+1},\ X_i(\omega)=\max _{1\leq k\leq \seq_{j+1}}X_k(\omega)\right\}.
\]
As in the argument above, we can show that $\{\seq_n\}_{n=1}^{j+1}$ and $\{b_n\}_{n=1}^{j+1}$ satisfy the desired conditions for all $1\leq n\leq j+1$. Thus, we are done.
\end{proof} 

Let $\omega\in\Omega_\infty$. For $i\in\mathbb{N}$ and $1\leq k\leq U_{\omega_i}+V_{\omega_i}$ we set $\edgeset^{(\omega_i,k)}=I_1$ if $1\leq k\leq U_{\omega_i}$ and $I^{(\omega_i,k)}=I_{2^d}$ if $U_{\omega_i}+1\leq k\leq V_{\omega_i}+U_{\omega_i}$.
Then, for all $i\in\mathbb{N}$ we have
\begin{align}\label{eq index relation}
\edgeset^{(\omega_i)}=\prod_{\ell=1}^{U_{\omega_i}+V_{\omega_i}} \edgeset^{(\omega_i,\ell)}.
\end{align}
We consider the non-autonomous conformal iterated function system 
\begin{align*}
&\Phi_\omega:=\{
\Phi^{(\omega_1,1)},\cdots,\Phi^{(\omega_1,U_{\omega_1}+V_{\omega_1})},
\cdots,
\Phi^{(\omega_i,1)},\cdots,\Phi^{(\omega_i,U_{\omega_i}+V_{\omega_i})},\cdots\},\text{ where }
\\&
\Phi^{(\omega_i,k)}:=\{\phi_{\boldsymbol{i}}\}_{\boldsymbol{i}\in \edgeset^{(\omega_i,k)}}
\text{ for each }
i\in \mathbb{N} \text{ and }1\leq k\leq U_{\omega_i}+V_{\omega_i}.
\end{align*}
For  $1\leq n\leq U_{\omega_1}+V_{\omega_1}$ we set $\widetilde\Sigma_\omega^n=\prod_{\ell=1}^n\edgeset^{(\omega_1,\ell)}$.
Also, for $n=\sum_{i=1}^{m-1}(U_{\omega_i}+V_{\omega_i})+k$ with $m\geq 2$ and $1\leq k\leq U_{\omega_m}+V_{\omega_m}$ we set 
\[
\widetilde\Sigma_\omega^n:=
   \prod_{i=1}^{m-1}\left(\prod_{\ell=1}^{U_{\omega_i}+V_{\omega_i}}\edgeset^{(\omega_{i},\ell)}\right)  \times \prod_{\ell=1}^kI^{(\omega_m,\ell)}.
   \]
By (\ref{eq index relation}), for all $m\in\mathbb{N}$ and $j_m=\sum_{i=1}^m(U_{\omega_i}+V_{\omega_i})$ we have 
$\Sigma_{\omega}^m=\tilde \Sigma_\omega^{j_m}$.
For $n\in\mathbb{N}$ and $\widetilde\tau \in \widetilde\Sigma_\omega^n$ we set 
$
\phi_{\widetilde\tau}^n:=\phi_{\widetilde\tau_1}\circ\cdots\circ\phi_{\widetilde\tau_n}
$ and $c_{\tilde \tau}=2^{-n}$.
By (\ref{eq def psi}) and (\ref{eq index relation}), we have 
\begin{align}\label{eq coincide limit set}
    J(\Phi_\omega):=\bigcap_{n=1}^{\infty}\bigcup_{\widetilde \tau\in \widetilde \Sigma_\omega^n}\phi^n_{\widetilde\tau}(X)=J(\Psi(\omega)).
\end{align}

 \begin{prop}\label{prop Hausdorff zero}
    For $\mathbb{P}$-a.s. $\omega\in \Omega$ we have
$    \dim_H(J(\Psi(\omega)))=0.$
\end{prop}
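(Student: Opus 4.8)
The plan is to bound $\dim_H(J(\Psi(\omega)))$ from above by exhibiting, for each $t>0$, a sequence of natural covers of the limit set whose $t$-dimensional covering sums tend to $0$. By (\ref{eq coincide limit set}) it suffices to work with the non-autonomous system $\Phi_\omega$: the level-$n$ cylinders $\{\phi^n_{\widetilde\tau}(X)\}_{\widetilde\tau\in\widetilde\Sigma^n_\omega}$ cover $J(\Phi_\omega)$, each has diameter $2^{-n}\sqrt{d}$, and their number is $\#\widetilde\Sigma^n_\omega$. Since each $I_1$-step carries no branching while each $I_{2^d}$-step multiplies the count by $2^d$, we have $\#\widetilde\Sigma^n_\omega=2^{d\beta_n}$, where $\beta_n$ denotes the number of $I_{2^d}$-coordinates among the first $n$ steps. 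Thus the natural covering sum at level $n$ equals $d^{t/2}\,2^{d\beta_n-tn}$, and it is enough to produce, for every $t>0$, a sequence of levels $n\to\infty$ along which $tn-d\beta_n\to\infty$.

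I would select these levels using Lemma \ref{lemma tequnical}. Fix $\omega\in\Omega_\infty$ together with the sequences $\{a_n\},\{b_n\}$, and write $r_n:=X_{b_n}(\omega)$. The level I take is the one at which block $b_n$ has just completed its $U_{r_n}$ many $I_1$-steps but has not yet branched, namely $n^*_n:=\sum_{i=1}^{b_n-1}(U_{\omega_i}+V_{\omega_i})+U_{r_n}$. At this level no branching has occurred inside block $b_n$, so $\beta_{n^*_n}=\sum_{i=1}^{b_n-1}V_{\omega_i}$ and $n^*_n\ge U_{r_n}$. Three features of the selection matter: $r_n=\omega_{b_n}$ is a strict record by (S3), so $\omega_i\le r_n-1$ for $i<b_n$; one has $b_n\le a_n\le r_n$ by (S1)--(S2); and the records strictly increase (since $a_{n+1}=r_n+1\le r_{n+1}$), so $r_n\to\infty$.

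The heart of the argument is to show $d\beta_{n^*_n}=o(n^*_n)$. Using that both $U$ and $V$ are non-decreasing in their index (a direct consequence of (F2)), together with $\omega_i\le r_n-1$ and $b_n\le r_n$, I would bound $\beta_{n^*_n}=\sum_{i=1}^{b_n-1}V_{\omega_i}\le r_nV_{r_n-1}$. Then the frame condition $(U_{r_n-1}+V_{r_n-1})^3\le U_{r_n}$ gives $U_{r_n-1}+V_{r_n-1}\le U_{r_n}/(U_{r_n-1}+V_{r_n-1})^2$, while $V_{r_n-1}\ge(r_n-1)U_{r_n-1}\ge r_n-1$ yields $(U_{r_n-1}+V_{r_n-1})^2\ge(r_n-1)^2$, so that $r_nV_{r_n-1}\le r_nU_{r_n}/(r_n-1)^2$. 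Hence $\beta_{n^*_n}/n^*_n\le\beta_{n^*_n}/U_{r_n}\le r_n/(r_n-1)^2\to0$, and for every fixed $t>0$ we obtain $tn^*_n-d\beta_{n^*_n}\ge U_{r_n}\bigl(t-d\,r_n/(r_n-1)^2\bigr)\to\infty$. Feeding $n^*_n$ back into the covering sum shows $\mathcal{H}^t(J(\Psi(\omega)))=0$ for every $t>0$, so $\dim_H(J(\Psi(\omega)))=0$ for all $\omega\in\Omega_\infty$, i.e.\ $\mathbb{P}$-a.s. The main obstacle, and the place where the cubic exponent in (F2) is indispensable, is exactly this competition between the wasted early branching $\sum_{i<b_n}V_{\omega_i}$ and the long no-branching run $U_{r_n}$: the record structure of Lemma \ref{lemma tequnical} guarantees simultaneously that the earlier indices are small ($\omega_i\le r_n-1$) and that there are few of them ($b_n\le r_n$), so that the super-cubic growth of $U$ overwhelms their cumulative branching.
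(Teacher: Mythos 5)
Your proof is correct and follows essentially the same route as the paper: you select exactly the same subsequence of levels $j_{b_n-1}+U_{\omega_{b_n}}$ via Lemma \ref{lemma tequnical}, and use the same record structure plus frame condition to show that the accumulated branching $\sum_{i<b_n}V_{\omega_i}$ is negligible against the non-branching run $U_{\omega_{b_n}}$. The only difference is cosmetic: where the paper cites \cite[Lemma 2.8]{nonautonomous} to pass from negative lower pressure to a dimension bound, you re-derive that step by the standard cylinder-cover estimate showing $\mathcal{H}^t(J(\Psi(\omega)))=0$ for every $t>0$, which makes the argument self-contained but rests on the same mechanism.
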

\begin{proof}
By (\ref{eq coincide limit set}), it is enough to show that for all $\omega\in \Omega_\infty$ we have $\dim_H(J(\Phi_\omega))=0$.
Let $\omega\in \Omega_\infty$.
By \cite[Lemma 2.8]{nonautonomous}, we have
\begin{align}\label{eq weak bowen}
    \dim_H(J(\Phi_\omega))\leq \inf \left\{t\geq 0:P(t):=\liminf_{n\to\infty}\frac{1}{n}\log\sum_{\widetilde\tau\in\widetilde\Sigma_{\omega}^n}
    c_{\tilde \tau}^t<0
    \right\}.
\end{align}
We will show that for all $t\geq 0$ we have 
$
P(t)\leq -t\log 2<0.
$
For all $n\in\mathbb{N}$ we set
$
j_n:=\sum_{i=1}^{n}(U_{\omega_i}+V_{\omega_i}).
$
Let $n\geq 2$ and let $t\geq 0$.
We have
\begin{align}\label{eq proof of prop 1}
    &\frac{1}{j_{b_n-1}+U_{\omega_{b_n}}}
    \log
    \sum_{\widetilde\tau\in
    \widetilde\Sigma_{\omega}^{j_{b_n-1}+U_{\omega_{b_n}}}}
    c_{\tilde\tau}^t
    \leq -t\log2+
    \frac{j_{b_n-1}\log 2}{j_{b_n-1}+U_{\omega_{b_n}}}
    \end{align}
By (S1) and (S2) of Lemma \ref{lemma tequnical}, we have 
$
b_n\leq \seq_n\leq \omega_{b_n}.
$
By (S3) of Lemma \ref{lemma tequnical}, we have $\max\{ \omega_i:{1\leq i\leq b_n-1}\}<\omega_{b_n}$. This implies that 
\[
j_{b_n-1}\leq b_n(U_{\omega_{b_n}-1}+V_{\omega_{b_n}-1})
\leq 2b_nV_{\omega_{b_n}-1}
\leq 2\omega_{b_n}V_{\omega_{b_n}-1}.
\]
By the definition of the frame, we have $k+1\leq V_k$ for all $k\geq 2$. Hence, by the definition of the frame, we obtain
\begin{align*}
    \frac{U_{\omega_{b_n}}}{j_{b_n-1}}
    \geq \frac{V_{\omega_{b_n}-1}^3}{2V_{\omega_{b_n}-1}^2}\geq\frac{V_{\omega_{b_n}-1}}{2} 
\text{ and thus, } \lim_{n\to\infty}\frac{U_{\omega_{b_n}}}{j_{b_n-1}}=\infty
\end{align*}
Therefore, by (\ref{eq proof of prop 1}), we obtain
$P(t)\leq -t\log 2<0$.
Hence, by (\ref{eq weak bowen}), for all $\omega\in \Omega_\infty$ we have
$
\dim_H(J(\Psi(\omega)))=0.
$
\end{proof}
Combining Proposition \ref{prop pressure} and Proposition \ref{prop Hausdorff zero}, we obtain the following theorem:
\begin{thm}
    Let $\mathcal{F}$ be a frame and let $\vec{p}$ be the probability vector such that $p_n=(Cn^2)^{-1}$ for all $n\in\mathbb{N}$. 
    Let $\Psi:=\Psi(\mathcal{F}):=\{\Psi(\mathcal{F})^{(i)}\}_{i\in\mathbb{N}}$.
    Then, for $\mathbb{P}_{\vec{p}}$-a.s. $\omega\in\Omega$ we have $\dim_H(J(\Psi(\omega)))<B(\Psi)$.
\end{thm}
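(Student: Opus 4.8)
The plan is to obtain this statement as an immediate corollary of the two propositions already established, since together they pin down both sides of the desired inequality. First I would invoke Proposition~\ref{prop pressure}, which identifies the Bowen parameter exactly. For the frame $\mathcal{F}$ every contraction ratio equals $1/2$, so the relevant random variable $Z_t$ depends only on the counts $U_{\omega_1}$ and $V_{\omega_1}$, and the expected pressure $E_{i\in\mathbb{N}}\bigl(\log\sum_{j\in I^{(i)}}(c_j^{(i)})^t\bigr)$ splits into the clean dichotomy $+\infty$ for $t<d$ and $-\infty$ for $t\geq d$. This forces $B(\Psi)=d$, and since the construction assumes $d\geq 1$ we have in particular $B(\Psi)\geq 1>0$.

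Second I would invoke Proposition~\ref{prop Hausdorff zero}, which gives $\dim_H(J(\Psi(\omega)))=0$ for $\mathbb{P}$-a.s.\ $\omega$. Combining the two, for $\mathbb{P}$-a.s.\ $\omega\in\Omega$ we obtain
\[
\dim_H(J(\Psi(\omega)))=0<1\leq d=B(\Psi),
\]
which is exactly the asserted strict inequality. No further estimate is needed at this stage: the concluding step is a one-line comparison of a deterministic constant with an almost-sure constant, so the theorem follows at once.

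Accordingly, the genuine difficulty does not lie in this final theorem but in the tension between the two propositions, which is precisely what the frame is engineered to create. The averaged (ergodic) pressure of Proposition~\ref{prop pressure} behaves as if the expanding maps indexed by $I_{2^d}$ dominate, yielding $B(\Psi)=d$; yet the almost-sure dimension is governed by the \emph{liminf} pressure $P(t)$ along the non-autonomous word $\Phi_\omega$, which Proposition~\ref{prop Hausdorff zero} drives negative for every $t>0$. The mechanism is that the record symbols $\omega_{b_n}$ supplied by Lemma~\ref{lemma tequnical}---which occur because $\frac{1}{n}\sum_{i=1}^n X_i(\omega)\to\infty$ almost surely---force runs of the pure-contraction maps indexed by $I_1$ of length $U_{\omega_{b_n}}$ so long, relative to everything preceding them (via the frame condition $(U_n+V_n)^3\leq U_{n+1}$), that $U_{\omega_{b_n}}/j_{b_n-1}\to\infty$. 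Thus the hard work is already spent in verifying that these rare but recurrent contracting runs collapse the dimension to zero while leaving the expected pressure untouched; once both propositions are in hand, the comparison $0<d$ delivers the theorem directly.
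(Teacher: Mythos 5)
Your proposal is correct and is exactly the paper's own argument: the theorem is stated immediately after the sentence ``Combining Proposition~\ref{prop pressure} and Proposition~\ref{prop Hausdorff zero}, we obtain the following theorem,'' i.e.\ $B(\Psi)=d\geq 1$ from the pressure dichotomy and $\dim_H(J(\Psi(\omega)))=0$ almost surely, so $0<d$ gives the strict inequality. Your closing paragraph on where the real work lies (the record-time mechanism in Lemma~\ref{lemma tequnical} and the frame condition) is accurate commentary but plays no role in the proof itself.
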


 \subsection*{Acknowledgments}
This work was supported by the JSPS KAKENHI 25KJ1382.

 \bibliographystyle{abbrv}
\bibliography{reference}
 \nocite{*}

\end{document}